\newtheorem{theorem}{Theorem}
\newtheorem{proposition}[theorem]{Proposition}
\newtheorem{corollary}[theorem]{Corollary}
\newtheorem{lemma}[theorem]{Lemma}
\newtheorem*{claim*}{Claim}
\theoremstyle{definition}
\newtheorem*{remark*}{Remark}
\def\IR{\mathbb{R}}
\def\eps{\varepsilon}
\newcommand{\defeq}{\mathrel{\vcenter{\baselineskip0.5ex \lineskiplimit0pt
                     \hbox{\scriptsize.}\hbox{\scriptsize.}}}%
                     =}
\begin{document}

\title{Acute sets of exponentially optimal size}

\author[Gerencs\'{e}r]{Bal\'{a}zs Gerencs\'{e}r}
\address{MTA Alfr\'ed R\'enyi Institute of Mathematics 
H-1053 Budapest, Re\'altanoda utca 13-15;
and E\"otv\"os Lor\'and University, Department of Probability and Statistics 
H-1117 Budapest, P\'azm\'any P\'eter s\'et\'any 1/c}
\email{gerencser.balazs@renyi.mta.hu}

\author[Harangi]{Viktor Harangi}
\address{MTA Alfr\'ed R\'enyi Institute of Mathematics 
H-1053 Budapest, Re\'altanoda utca 13-15}
\email{harangi@renyi.hu}

\thanks{The first author was supported by 
NKFIH (National Research, Development and Innovation Office) grant PD 121107. 
The second author was supported by 
``MTA R\'enyi Lend\"ulet V\'eletlen Spektrum Kutat\'ocsoport''.}

\keywords{acute set, acute angles, hypercube, strictly antipodal}

\subjclass[2010]{51M04, 51M15}


%
\begin{abstract}
We present a simple construction of an acute set of size $2^{d-1}+1$ in $\IR^d$ 
for any dimension $d$. That is, we explicitly give $2^{d-1}+1$ points 
in the $d$-dimensional Euclidean space with the property that 
any three points form an acute triangle. It is known that the maximal number of 
such points is less than $2^d$. Our result significantly improves upon a recent 
construction, due to Dmitriy Zakharov, with size of order $\varphi^d$ 
where $\varphi = (1+\sqrt{5})/2 \approx 1.618$ is the golden ratio. 
\end{abstract}

\maketitle


\section{Introduction}
Around 1950 Erd\H os conjectured that given more than $2^d$ points in $\IR^d$ 
there are three of them determining an obtuse angle. 
In 1962 Danzer and Gr\"unbaum proved this conjecture \cite{danzer/grunbaum:1962} 
(their proof can also be found in \cite{proofs_from_the_book}). 

In other words, if we want to find as many points as possible 
with all angles being at most $\pi/2$, then we cannot do better than $2^d$ points 
in dimension $d$. The vertices of the $d$-dimensional hypercube show that 
$2^d$ points exist with this property. However, in the hypercube 
a huge number of the angles are actually equal to $\pi/2$. 
A natural question arises:  what is the maximal number of points 
if we want all angles to be acute, that is, strictly less than $\pi/2$? 
A set of such points will be called an \textit{acute set}. 

The exclusion of right angles seemed to decrease 
the maximal number of points dramatically: 
Danzer and Gr\"unbaum could only find $2d-1$ points, 
and they conjectured that this is the best possible. 
However, this was only proved for $d=2, 3$. 
(For the non-trivial case $d=3$ 
see e.g.~\cite{grunbaum:1963}.) 

Later Erd\H os and F\"uredi used the probabilistic method \cite{erdos/furedi:1983} 
(choosing random vertices of the hypercube) 
to prove the existence of exponentially large acute sets of size
\begin{equation*}
\frac{1}{2} \left(\frac{2}{\sqrt{3}} \right)^d > 0.5 \cdot 1.154^d .
\end{equation*}
In \cite{harangi:2011} this construction was generalized and 
the improved bound $c \cdot 1.2^d$ was obtained. 
Recently Dmitriy Zakharov vastly surpassed the random approach 
by an explicit recursive construction \cite{zakharov:2017} 
providing $F_{d+2} > 1.618^d$ points 
where $F_n$ denotes the Fibonacci sequence. 

The next surprise came when a mathematics enthusiast from Ukraine 
(who wished to remain anonymous) came up with numerical examples of 
a $4$-dimensional acute set of size $9$ and 
a $5$-dimensional acute set of size $17$. 
(See \url{http://dxdy.ru/post1222167.html#p1222167} and \url{http://dxdy.ru/post1231694.html#p1231694} for these examples.) 
Previously, the best known lower bounds were 
$8$ in dimension $4$ and $13$ in dimension $5$, 
see \cite{harangi:2011,zakharov:2017}. 
His idea was to start from the vertices of a $(d-1)$-dimensional hypercube and 
slightly modify the coordinates. Using only one extra dimension 
he could turn the vertex set into an acute set. 
Moreover, one extra point could be easily added. 

Inspired by these examples we managed to make the same essential idea work 
in any dimension $d$. 
\begin{theorem} \label{thm:main}
There exist $2^{d-1}+1$ points in $\IR^d$ such that 
any three of them form an acute triangle. 
\end{theorem}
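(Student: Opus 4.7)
The plan is to realise $2^{d-1}$ of the points as small perturbations of the $(d-1)$-cube vertices embedded as $(v, 0) \in \IR^d$ for $v \in \{0,1\}^{d-1}$, and to append one extra point $Q$, as sketched in the introduction. Writing $P_v = (v, 0) + \eps\, w(v)$ for a small $\eps > 0$ and a perturbation $w : \{0,1\}^{d-1} \to \IR^d$ to be designed, I would expand, for any three distinct cube-indexed points,
\[
(P_B - P_A) \cdot (P_C - P_A) = (B-A) \cdot (C-A) + \eps\, T_1(A,B,C) + \eps^2\, T_2(A,B,C),
\]
where the leading term is a non-negative integer that vanishes precisely when $A$ lies in the Boolean interval $[B \wedge C, B \vee C]$. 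When this leading term is at least $1$, a sufficiently small $\eps$ cannot reverse the sign, so the angle is automatically acute; all the difficulty is concentrated in the "hypercube right-angle" case $(B-A) \cdot (C-A) = 0$.

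For those right-angle triples the corrections $\eps T_1 + \eps^2 T_2$ must be strictly positive. Splitting $w(v) = (u(v), s(v))$ into a perturbation $u(v) \in \IR^{d-1}$ of the first $d-1$ coordinates and a last coordinate $s(v) \in \IR$, one has $T_1 = (B-A)\cdot(u(C)-u(A)) + (u(B)-u(A))\cdot(C-A)$ depending only on $u$, and a scalar contribution $(s(B)-s(A))(s(C)-s(A))$ appearing inside $T_2$. The design goal is to pick $u$ and $s$ so that for every Boolean-between triple either $T_1$ is strictly positive, or $T_1$ vanishes and the $s$-contribution in $T_2$ is strictly positive; using two separate perturbation scales (say $\eps$ for $u$ and a smaller scale for $s$) cleanly separates the two cases. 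The extra point $Q$ I would take to be a "lifted centre" such as $Q = (\tfrac12, \ldots, \tfrac12, \tau)$ with $\tau$ large: the angles at $P_A$ and $P_B$ in a triangle $Q\,P_A\,P_B$ are controlled by positive hypercube inner products of the form $\tfrac12 |A \oplus B|$, while the angle at $Q$ is forced acute by taking $\tau$ large enough that $\tau^2$ dominates the negative term $(d-1 - 2|A \oplus B|)/4$ coming from the cube geometry.

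The main obstacle I expect is the combinatorial construction of $u$ and $s$: every Boolean-between triple must be handled \emph{simultaneously}, and a scalar-only perturbation ($u \equiv 0$) can be shown to fail for $d \geq 4$, because each antipodal pair of the $(d-1)$-cube of Hamming distance $\geq 3$ forces its two $s$-values to be extremal on the surrounding subcube, and too many such constraints cannot coexist in a single real-valued $s$. Accordingly the vector-valued $u$ must do the bulk of the work at first order, with $s$ reserved for a residual set at second order. The delicate part of the proof will be writing down an explicit $u$ and $s$ and verifying, by a case analysis on the Hamming structure of $(A, B, C)$, that the combined perturbation really does contribute positively for every right-angle triple.
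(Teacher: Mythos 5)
Your setup is sound as far as it goes: embedding the $(d-1)$-cube as $\{0,1\}^{d-1}\times\{0\}$, observing that $(B-A)\cdot(C-A)$ is a non-negative integer vanishing exactly on the Boolean-interval triples, and appending the lifted centre $(\tfrac12,\dots,\tfrac12,\tau)$ with $\tau>\sqrt{d-1}/2$ all match the paper. But the heart of the proof is missing: you never produce the perturbation. You explicitly defer ``writing down an explicit $u$ and $s$ and verifying, by a case analysis\dots'' --- and that is precisely the entire difficulty, since a single global-scale perturbation must satisfy all the Boolean-interval constraints simultaneously, which is exactly the hard combinatorial problem you yourself flag as the main obstacle. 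As written, the argument establishes nothing beyond the (easy) reduction to that problem. There is also a sign that your intended scales point the wrong way: for a middle-point right angle the in-plane shift contributes a \emph{negative} term of order $a$ to the inner product, so the vertical lift $b$ must satisfy $b^2\gtrsim a$, i.e.\ $b\sim\sqrt{a}\gg a$; your suggestion to take the last-coordinate perturbation $s$ at a \emph{smaller} scale than $u$ would leave that negative term uncompensated (and in the $T_1=0$ branch the $s$-part of $T_2$ would be dominated by the possibly negative $u$-part of $T_2$).

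The idea you are missing is that the simultaneous problem can be avoided entirely by perturbing the vertices \emph{one at a time} with rapidly decreasing magnitudes $\eps_1\gg\eps_2\gg\cdots\gg\eps_N$. At step $i$ one only needs a lemma about moving a \emph{single} vertex $x$ of the cube so that every angle formed by the moved point and two \emph{unmoved} cube vertices becomes acute; this is done explicitly by sending $x=0$ to $(a,\dots,a,b)$ with $0<a<1$ and $b^2\ge(d-1)a$ (non-middle angles gain a contribution $a$ or $1-a$ from a coordinate where the other two vertices differ; middle angles gain $b^2-(d-1)a\ge 0$ --- in fact $>0$ after the sharper bound $-(d-1)a(1-a)$). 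Then a trivial stability observation --- finitely many acute triangles stay acute under sufficiently small perturbations of their vertices --- lets you choose each $\eps_{i+1}$ small enough to preserve everything achieved so far. This decoupling is what makes the construction go through without any global case analysis on the Hamming structure of triples.
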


By this we achieve the optimal exponential rate $2$. Furthermore, 
the best known lower and upper bounds ($2^{d-1}+1$ and $2^d-1$, respectively) 
are now within a factor $2$. 

A related notion is \emph{strictly antipodal sets}. 
A set of points in $\IR^d$ is called strictly antipodal if 
for any two points $x$ and $y$ of the set there are parallel hyperplanes 
$H_x$ and $H_y$ passing through $x$ and $y$ (respectively) 
such that all other points in the set lie strictly between $H_x$ and $H_y$. 
It is easy to see that every acute set is automatically strictly antipodal. 
(We can choose $H_x$ and $H_y$ orthogonal to the line $xy$.) 
Therefore our result readily implies the following lower bound 
on the maximal cardinality of strictly antipodal sets. 
\begin{corollary}
There exists a strictly antipodal set in $\IR^d$ 
of cardinality $2^{d-1}+1$. 
\end{corollary}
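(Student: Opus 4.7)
The plan is to deduce the corollary immediately from Theorem~\ref{thm:main} by verifying the elementary implication acute set $\Rightarrow$ strictly antipodal set, which is already sketched in the paragraph preceding the statement. So I would simply take the $2^{d-1}+1$ points constructed for the theorem and show that they form a strictly antipodal set with respect to the obvious choice of hyperplanes.

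More concretely, let $S \subset \IR^d$ be the acute set of cardinality $2^{d-1}+1$ from Theorem~\ref{thm:main}. Pick any two points $x,y \in S$. I would define $H_x$ as the hyperplane through $x$ orthogonal to the vector $y-x$, and $H_y$ as the parallel hyperplane through $y$. These are automatically parallel by construction, so the only thing left is to check that every other point $z \in S \setminus \{x,y\}$ lies strictly between them.

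The key step is the following observation: the signed distance of $z$ from $H_x$ in the direction of $y-x$ equals $\langle z-x, y-x\rangle / \|y-x\|$, and similarly $z$ lies strictly on the $x$-side of $H_y$ precisely when $\langle z-y, x-y\rangle > 0$. Both of these inner-product conditions are exactly the statement that the angles $\angle yxz$ and $\angle xyz$ of the triangle $xyz$ are strictly less than $\pi/2$. Since $S$ is acute, both of these angles are strictly less than $\pi/2$ (in fact the third angle at $z$ is also, but we do not need that), and therefore $z$ lies strictly between $H_x$ and $H_y$.

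There is essentially no obstacle here: the proof is a one-line reduction, and the only ``calculation'' is rewriting the acute-angle condition as a pair of strict inequalities on inner products. The corollary thus follows without any further work beyond Theorem~\ref{thm:main}.
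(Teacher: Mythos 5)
Your proposal is correct and matches the paper's argument exactly: the paper also deduces the corollary from Theorem~\ref{thm:main} by choosing $H_x$ and $H_y$ orthogonal to the line $xy$, and your inner-product verification just fills in the routine details the paper leaves implicit.
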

The best earlier lower bound was $3^{\lfloor d/2 \rfloor - 1} - 1$ 
due to Barvinok, Lee and Novik \cite{barvinok:2013}. 
Note that the same upper bound ($2^d-1$) holds 
for strictly antipodal sets as well \cite{danzer/grunbaum:1962}. 
Hence our result implies that the optimal exponential rate 
for strictly antipodal sets is also $2$. 

\subsection*{Sketch of the construction}
Let $X$ denote a $(d-1)$-dimensional hypercube in $\IR^d$, 
and let $u$ be a unit vector orthogonal to $X$. 
The idea is to slightly perturb $X$ in a way that all the right angles in $X$ become acute. 
First we take a vertex $x$ and move it a bit closer to the center of $X$, that is, 
we choose a small $a>0$ and shift $x$ towards the center by distance $a$. 
How do the right angles (involving $x$) change?  
It is easy to see that if $x$ is the middle point of the angle, 
then the angle becomes obtuse. 
However, if it is a non-middle point, the angle will be acute. 
We can get rid of the new obtuse angles by further translating $x$ 
(this time in the orthogonal direction) with $bu$ for some small $b>0$. 
If $a$ and $b$ are appropriately coupled, 
then all angles involving $x$ become acute. 

We want to proceed similarly for all other vertices. 
We want to avoid, however, disturbing the acute angles that 
we have created so far. To this end, we will move the subsequent vertices 
by a much smaller magnitude. So we take another vertex $x_2$, 
shift it towards the center by distance $a_2$ and then translate it with $b_2u$, 
where $a_2$ and $b_2$ are appropriately coupled and much smaller than $a$ and $b$. 
If we continue this way (taking smaller and smaller pairs $a_i,b_i$), 
then all angles will be acute in the end. 
Furthermore, one more point can be easily added to this acute set: 
translate the center of the hypercube by $c u$ for some large enough $c$. 

In the rest of the paper we will make the above argument precise.


\section{Construction of the acute set}

This section contains the proof of Theorem \ref{thm:main}. 
First we give $2^{d-1}$ points by perturbing the vertices 
of a hypercube, then add an extra point. 

\subsection{Perturbation of the hypercube}

Our construction is based on the following lemma.
\begin{lemma}
Given a $(d-1)$-dimensional hypercube in $\IR^d$ and $\eps >0$ 
one can move a vertex of the hypercube by distance at most $\eps$ 
in a way that any angle determined by this point and 
two other vertices of the hypercube is acute.

Formally, let $X \subset \IR^d$ denote the vertex set of 
a $(d-1)$-dimensional hypercube and let $x \in X$ be an arbitrary vertex. 
Then for all $\eps >0$ there exists $x' \in \IR^d$ such that 
$|x-x'| \leq \eps$ and the angles $\angle x'yz$ and $\angle yx'z$ are acute 
for any distinct $y,z \in X \setminus \{x\}$. 
\end{lemma}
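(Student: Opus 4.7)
The plan is to follow the sketch and place the hypercube so that everything is coordinate-aligned. After a rigid motion, take $X = \{0,1\}^{d-1} \times \{0\} \subset \IR^d$, and by the vertex-transitivity of the hypercube assume $x = \mathbf{0}$. The perturbation has two ingredients: a shift of $x$ towards the centre of $X$ controlled by a parameter $s > 0$, and a lift out of the hyperplane spanned by $X$ controlled by a parameter $t > 0$. Concretely, I would define
\[
x' \defeq (s, s, \ldots, s, t) \in \IR^d,
\]
and write every $y \in X$ as $y = (\tilde y, 0)$ with $\tilde y \in \{0,1\}^{d-1}$.

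For the angle at $y$ the relevant inner product is
\[
(x' - y) \cdot (z - y) = \sum_{i=1}^{d-1} (s - \tilde y_i)(\tilde z_i - \tilde y_i),
\]
since the $d$-th coordinate of $z - y$ vanishes. A case analysis over $(\tilde y_i, \tilde z_i) \in \{0,1\}^2$ shows that, for any $s \in (0,1)$, every summand is nonnegative, and whenever $\tilde y_i \neq \tilde z_i$ the summand is at least $\min(s, 1-s) > 0$. Because $y \neq z$ forces at least one such coordinate, the inner product is strictly positive, so the angle is acute for any value of $t$. Swapping the roles of $y$ and $z$ takes care of the angle at $z$ as well.

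For the angle at $x'$ the inner product is
\[
(y - x') \cdot (z - x') = \sum_{i=1}^{d-1} (\tilde y_i - s)(\tilde z_i - s) + t^2.
\]
The same case analysis shows that coordinates with $\tilde y_i = \tilde z_i$ contribute nonnegatively while coordinates with $\tilde y_i \neq \tilde z_i$ contribute exactly $-s(1-s)$. The worst case is thus $y$ and $z$ complementary, yielding a first sum equal to $-s(1-s)(d-1)$. Hence the inequality
\[
t^2 > s(1-s)(d-1)
\]
is sufficient to make the inner product strictly positive for every admissible pair $y, z$.

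It remains to choose $s, t$ small enough to satisfy both the above inequality and the distance constraint $|x - x'|^2 = s^2(d-1) + t^2 \leq \eps^2$; fixing $t$ of order $\eps$ and $s$ of order $\eps^2/(d-1)$ works, as a short verification shows. The main obstacle is precisely this coupling: shifting $x$ towards the centre alone would make the right angles at $x$ obtuse, so the lift $t$ must dominate $\sqrt{s(d-1)}$; yet both parameters must fit inside the $\eps$-ball around $x$. The quadratic-versus-linear scaling $t \sim \eps$ and $s \sim \eps^2$ resolves the tension cleanly, completing the argument.
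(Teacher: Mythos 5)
Your proposal is correct and follows essentially the same route as the paper: the same normalization $X = \{0,1\}^{d-1}\times\{0\}$ with $x=\mathbf{0}$, the same perturbed point $(s,\ldots,s,t)$, the same two-case inner-product analysis, and the same coupling $t^2 > s(1-s)(d-1)$ (the paper uses the slightly weaker sufficient condition $b^2 \geq (d-1)a$). Your explicit scaling $t\sim\eps$, $s\sim\eps^2/(d-1)$ is a fine way to finish, where the paper simply notes the constraints are compatible.
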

\begin{proof}
We may assume that 
\begin{equation*}
X = \underbrace{ \{0,1\} \times \cdots \times \{0,1\} }_{d-1} \times \{0\} 
\subset \IR^d , 
\end{equation*}
that is, we consider the set $X$ of those points for which 
each of the first $d-1$ coordinates is $0$ or $1$ and the last coordinate is $0$. 

We also assume that the vertex we want to move is $x = (0, \ldots, 0, 0) \in X$. 
We claim that the point 
\begin{equation*}
x' = ( \underbrace{ a, \ldots, a }_{d-1} , b ) \in \IR^d 
\end{equation*}
satisfies the required properties for appropriately chosen $0<a<1$ and $b$. 

We need to check that all angles formed by $x'$ and 
two distinct vertices $y,z \in X \setminus \{x\}$ are acute.

\noindent\textbf{Case 1:} $x'$ is not the middle point. 
To show that the angle $\angle x'yz$ is acute we need to prove that 
the inner product $\left\langle x'-y,z-y \right\rangle$ is positive. 
We can write this inner product as a coordinate-wise sum. 
Since $y$ and $z$ are distinct, 
there is at least one coordinate where they are different 
(one is $0$, the other is $1$). 
The contribution of such a coordinate to the inner product is either $a$, or $1-a$, 
which is positive given that $0<a<1$. 
In the other coordinates the contribution is clearly $0$. 

\noindent\textbf{Case 2:} $x'$ is the middle point. 
This time we need that the inner product 
$\left\langle y-x',z-x' \right\rangle$ is positive. 
The contribution of each of the first $d-1$ coordinates 
is one of the following: $-a(1-a)$, $a^2$, $(1-a)^2$. 
It follows that their total contribution is at least $-(d-1)a(1-a) > -(d-1)a$. 
As for the last coordinate, its contribution to the inner product is $b^2$. 
Therefore 
\begin{equation*} 
\left\langle y-x',z-x' \right\rangle > b^2 - (d-1)a .
\end{equation*}

In conclusion, all required angles are acute provided that 
\begin{equation*} 
0<a<1 \mbox{ and } b^2 \geq (d-1)a.
\end{equation*}
These conditions can be easily satisfied along with 
$|x-x'|^2 = (d-1)a^2 + b^2 \leq \eps^2 $. 
\end{proof}
By repeatedly applying the above lemma we get the following. 
\begin{proposition} \label{prop}
Given a $(d-1)$-dimensional hypercube in $\IR^d$ and $\delta >0$ 
one can move each vertex of the hypercube by distance at most $\delta$ 
such that the resulting $2^{d-1}$ points form an acute set. 
\end{proposition}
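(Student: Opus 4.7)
The plan is to apply the lemma iteratively, one vertex at a time, with rapidly decreasing perturbation tolerances. Label the vertices of $X$ as $x_1, \ldots, x_N$ where $N = 2^{d-1}$, and inductively construct perturbations $x_1', \ldots, x_N' \in \IR^d$ together with a decreasing sequence $\delta \geq \varepsilon_1 \geq \cdots \geq \varepsilon_N > 0$. At step $i$, apply the lemma to $x_i$ with tolerance $\varepsilon_i$, obtaining $x_i'$ with $|x_i - x_i'| \leq \varepsilon_i$. Let $\mu_i > 0$ denote the minimum of the positive inner products $\langle y - x_i', z - x_i' \rangle$ and $\langle x_i' - y, z - y \rangle$ guaranteed by the lemma, ranging over distinct $y, z \in X \setminus \{x_i\}$.

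The crucial observation is that every triangle in the final set $X' = \{x_1', \ldots, x_N'\}$ can be certified acute using the \emph{earliest} applicable step. For a triple $\{p, q, r\}$, set $i = \min(p, q, r)$ and $\{j, k\} = \{p, q, r\} \setminus \{i\}$. At step $i$, the lemma ensures that all three angles of the auxiliary triangle $\{x_i', x_j, x_k\}$ (where $x_j, x_k$ are still the original hypercube vertices) are acute, with every relevant inner product at least $\mu_i$. Passing from $\{x_i', x_j, x_k\}$ to $\{x_i', x_j', x_k'\}$ shifts $x_j, x_k$ by at most $\varepsilon_j, \varepsilon_k$, and a routine expansion (using the identity $\langle A', B' \rangle - \langle A, B \rangle = \langle A' - A, B' \rangle + \langle A, B' - B \rangle$) shows that each of the three inner products changes by at most $C(\varepsilon_j + \varepsilon_k)$, where $C$ is a constant depending only on $\operatorname{diam}(X)$ and $\delta$.

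The key point is that $j, k > i$, so the sizes $\varepsilon_j, \varepsilon_k$ are determined only at later steps. This motivates the recursive rule: after step $i$, pick $\varepsilon_{i+1} \leq \min(\varepsilon_i, \mu_1/(4C), \ldots, \mu_i/(4C))$. Each $\mu_j$ is a fixed positive number once step $j$ completes, so this constraint is always satisfiable. With this choice, $\varepsilon_j + \varepsilon_k \leq 2\varepsilon_{i+1} \leq \mu_i/(2C)$ for all $j, k > i$, so the change in each inner product is at most $\mu_i/2$, preserving positivity in the final configuration.

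The main point requiring care is this dependency management: inspecting the lemma's proof shows that $\mu_i$ can be quite small (roughly on the order of $\varepsilon_i^4$ in the middle-point case), forcing $\varepsilon_{i+1}$ to shrink very rapidly. Nevertheless, $\mu_i > 0$ at each step and there are only $N = 2^{d-1}$ steps in total, so the recursion always produces positive tolerances and the construction succeeds, yielding an acute set of $2^{d-1}$ points each within $\delta$ of the original hypercube vertices.
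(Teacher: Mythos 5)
Your proof is correct and follows essentially the same strategy as the paper: apply the lemma vertex by vertex with rapidly decreasing tolerances, and certify each triangle of the final configuration at the step indexed by its smallest vertex, where the other two points are still unperturbed hypercube vertices. The only difference is that you make the stability step quantitative (via the explicit inner-product bound $C(\varepsilon_j+\varepsilon_k)$ against the margin $\mu_i$), whereas the paper invokes the soft fact that finitely many acute triangles admit a common perturbation tolerance; both are valid.
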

\begin{proof}
Let $N \defeq 2^{d-1}$ and let $x_1, x_2, \ldots, x_N \in X$ be an enumeration of 
the vertices of the hypercube (in an arbitrary order). 
We will apply the lemma to each $x_i$ 
(with different $\eps_i$ that we will specify later) 
and obtain new points $x'_i$. 
We start with $x_1$ and apply the lemma using $\eps_1 = \delta$. 

For any acute triangle there clearly exists a positive $\eps$ 
with the property that if each vertex of the triangle is moved 
by distance at most $\eps$, then the triangle is still acute. 
Given finitely many acute triangles we can choose a positive $\eps$ 
such that all these triangles have this property (for this common $\eps$). 

Now suppose that we have already obtained 
$x'_1, \ldots, x'_i$ for some $1 \leq i < N$ 
and let $S_i$ be the set of all triangles 
that are acute and whose vertices are among 
$x'_1, \ldots, x'_i, x_{i+1}, \ldots, x_N$. 
We choose $0<\eps_{i+1} \leq \eps_i$ in a way that 
if we move each vertex of a triangle in $S_i$ 
by distance at most $\eps_{i+1}$, then we still get an acute triangle. 

We claim that in the end the points $x'_1, \ldots, x'_N$ will form an acute set. 
We need to show that the triangle $x'_i x'_j x'_k$ is acute for any $i<j<k$. 
Since $x_j$ and $x_k$ are vertices of the hypercube, 
the triangle $x_i' x_j x_k$ is acute according to the lemma. 
Therefore this triangle is in the set $S_i$. 
Since $|x'_j - x_j| \leq \eps_j \leq \eps_{i+1}$ and 
$|x'_k - x_k| \leq \eps_k \leq \eps_{i+1}$, 
it follows that $x'_i x'_j x'_k$ is also an acute triangle. 
\end{proof}

\subsection{The cherry on the cake}

Finally, we add one more point to the acute set. 
Let $X = \{x_1, \ldots, x_N\} = \{0,1\} \times \cdots \times \{0,1\} \times \{0\}$ be 
the $(d-1)$-dimensional unit hypercube and let $\{x'_1, \ldots, x'_N\}$ be 
the acute set obtained using Proposition \ref{prop} with some small $\delta$. 
We claim that if we add $x_0 = (1/2,\ldots,1/2,c)$ to this set, 
then we will still have an acute set provided that 
$c > \sqrt{d-1}/2$ and $\delta$ is sufficiently small. 

To see this, we first note that 
the distance $| x_0 - x_i | = \sqrt{(d-1)/4+c^2}$ is the same for each $i$. 
Therefore every triangle $x_0 x_i x_j$ is isosceles, 
and consequently the angles at $x_i$ and $x_j$ are automatically acute. 
As for the angle at $x_0$, it is acute if and only if 
$ |x_i - x_j| < \sqrt{2} | x_0 - x_i | $. 
Since $|x_i - x_j|$ is at most $\sqrt{d-1}$ for any two vertices of the unit hypercube, 
this is always satisfied provided that $c > \sqrt{d-1}/2$. 
Now by choosing $\delta$ to be sufficiently small 
the triangles $x_0 x'_i x'_j$ can be arbitrarily close to the triangles $x_0 x_i x_j$, 
and hence they are acute as well. 

%
%
%
%
%
%
%
%

\end{document}